\renewcommand{\emph}[1]{\textit{#1}}
\definecolor{brown}{cmyk}{0, 0.72, 1, 0.45}
\definecolor{grey}{gray}{0.5}
\newcommand{\old}[1]{}
\newcounter{rot}
\newcommand{\ignore}[1]{}
\newcommand{\set}[1]{\left\{#1\right\}}
\def\cP{\mathcal{P}}
\def\ii_(#1,#2){i_{#1}^{#2}}
\def\a{\alpha}
\def\b{\beta}
\def\e{\varepsilon}
\def\G{\Gamma}
\def\s{\sigma}
\def\t{\tau}
\def\1{{\bf 1}}
\def\0{{\bf 0}}
\newcommand{\brac}[1]{\left( #1 \right)}
\renewcommand{\Pr}{\operatorname{\bf Pr}}
\newcommand\bfrac[2]{\left(\frac{#1}{#2}\right)}
\newcommand{\nospace}[1]{}
\def\path{\operatorname{PATH}}
\newcommand{\mult}[2]{\begin{multline}\label{#1}#2\end{multline}}
\newcommand{\mults}[1]{\begin{multline*}#1\end{multline*}}
\newcommand{\beq}[2]{\begin{equation}\label{#1}#2\end{equation}}
\newtheorem{theorem}{Theorem}[section]
\newtheorem{lemma}[theorem]{Lemma}
\newtheorem{remthm}[theorem]{Remark}
\newtheorem{definition}[theorem]{Definition}
\newtheorem{notation}[theorem]{Notation}
\newenvironment{remark}{\begin{remthm}\it }{\end{remthm}}%
\newcounter{thmtemp}
\def\cP{{\mathcal P}}
\begin{document}
\author{Michael Anastos and Alan Frieze\thanks{Research supported in part by NSF grant DMS1661063}\\Department of Mathematical Sciences\\Carnegie Mellon University\\Pittsburgh PA 15213} 

\title{On the connectivity threshold for colorings of random graphs and hypergraphs}
\maketitle
\begin{abstract}
Let $\Omega_q=\Omega_q(H)$ denote the set of proper $[q]$-colorings of the hypergraph $H$. Let $\G_q$ be the graph with vertex set $\Omega_q$ and an edge $\set{\s,\t}$ where $\s,\t$ are colorings iff $h(\s,\t)=1$. Here $h(\s,\t)$ is the Hamming distance $|\set{v\in V(H):\s(v)\neq\t(v)}|$. We show that if $H=H_{n,m;k},\,k\geq 2$, the random $k$-uniform hypergraph with $V=[n]$ and $m=dn/k$ then w.h.p. $\G_q$ is connected if $d$ is sufficiently large and $q\gtrsim (d/\log d)^{1/(k-1)}$. Furthermore, with a few more colors, we find that the diameter of $\G_q$ is $O(n)$ w.h.p.
\end{abstract}
\section{Introduction}\label{intro}
In this paper, we will discuss a structural property of the set $\Omega_q$ of proper $[q]$-colorings of the random hypergraph $H=H_{n,m;k}$, where $m=dn/k$ for some large constant $d$. Here $H$ has vertex set $V=V(H)=[n]$ and an edge set $E=E(H)$ consisting of $m$ randomly chosen $k$-sets from $\binom{[n]}{k}$. Note that in the graph case where $k=2$ we have $H_{n,m;2}=G_{n,m}$. A proper $[q]$-coloring is a map $\s:[n]\to [q]$ such that $|\s^{-1}(e)|\geq 2$ for all $e\in E$ i.e. no edge is mono-chromatic. Then let us define $\G_q=\G_q(H)$ to be the graph with vertex set $\Omega_q$ and an edge $\set{\s,\t}$ iff $h(\s,\t)=1$ where $h(\s,\t)$ is the Hamming distance $|\set{v\in [n]:\s(v)\neq\t(v)}|$. In the Statistical Physics literature the definition of $\G_q$ may be that colorings $\s,\t$ are connected by an edge in $\G_q$ whenever $h(\s,\t)=o(n)$. Our theorem holds a fortiori if this is the case.

{\bf Notation:} $f(d)\gtrsim g(d)$ if $f(d)\geq (1+\e_d)g(d)$ for $d$ large and where $\e_d>0$ and $\lim_{d\to\infty}\e_d=0$.

Then let 
\beq{defab}{
\alpha=\bfrac{(k-1)d}{\log d-5(k-1)\log \log d}^{\frac{1}{k-1}},\quad\beta=3\log^{3k}d.
}
We prove the following.  
\begin{theorem}\label{th1}
Suppose that $k\geq 2$ and $p=\frac{d}{\binom{n-1}{k-1}}$ and $m=\binom{n}{k}p$ and that  $d=O(1)$ is sufficiently large. Then 
\begin{enumerate}[(i)]
\item If $q\geq \a+\b+1$ then w.h.p. $\G_q$ is connected.
\item If $q\geq \a+2\b+1$ then the diameter of $\G_q$ is $O(n)$ w.h.p.
\end{enumerate}
\end{theorem}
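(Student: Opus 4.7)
The plan is to prove (i) and (ii) simultaneously by exhibiting, for each proper coloring $\tau\in\Omega_q$, a path in $\Gamma_q$ from $\tau$ to a fixed canonical coloring $\sigma_0$. Existence of such a path yields (i); bounding its length by $O(n)$ yields (ii). I would partition the palette into $[\alpha+1]$ \textit{primary} colors and $\{\alpha+2,\ldots,q\}$ \textit{spare} colors (with $\beta$ of the latter in case (i), and $2\beta$ in case (ii)). Using the known chromatic-number bound for $H_{n,m;k}$, whose threshold matches $\alpha$, I would show that \whp there exists a canonical $\sigma_0$ which uses only primary colors; this is the "target" of every path.

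Before constructing paths I would record the structural properties of $H=H_{n,m;k}$ that hold \whp: (a) maximum degree $O(\log n/\log\log n)$; (b) a small-subgraph condition that every vertex set of size $s\leq \log n$ spans at most $(1+o(1))s$ edges; and (c) that the vertices lying in atypically dense neighborhoods form a set of size $o(n)$, contained in small and well-separated components of a suitable auxiliary graph. These are standard first- and second-moment computations for random $k$-uniform hypergraphs.

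The core of the proof is a \textit{sweeping procedure}. Set $D=\{v:\tau(v)\neq\sigma_0(v)\}$ and process $D$ in an order consistent with a core-peeling of $H$. For each $v$ one wishes to recolor to $\sigma_0(v)$, either (1) recolor $v$ directly if no edge through $v$ becomes monochromatic, or (2) first park $v$ at a spare color $c^*\in\{\alpha+2,\ldots,q\}$ compatible with the current colors of $v$'s edges, then later recolor to $\sigma_0(v)$. A compatible $c^*$ exists because the edges through $v$ forbid at most $\deg(v)$ spare colors, and $\deg(v)\ll\beta=3\log^{3k}d$ for a typical $v$; for atypical $v$ lying in dense clusters, a local auxiliary rearrangement of $v$'s neighborhood (whose scope is bounded by (b)) frees enough spare colors to proceed.

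The main obstacle will be the dense part of $H$, where the worst-case degree $O(\log n/\log\log n)$ vastly exceeds $\beta$. The crux is to show that the bad local sub-configurations which could genuinely freeze a vertex appear with probability $o(1/n^c)$, so a union bound over vertices (or over local patches) absorbs them; the calibration $\beta=3\log^{3k}d$ should make the probability of a "rigid" local obstruction decay like $d^{-\Omega(\beta)}$, matching the $\operatorname{poly}(n)$ enumeration of such patches. For (ii), each vertex of $D$ is touched a bounded number of times by the sweeping procedure (at most one park and one final recolor), giving path length $O(|D|)=O(n)$; the extra $\beta$ spare colors granted by the stronger hypothesis $q\geq\alpha+2\beta+1$ should allow the parking steps to be scheduled without local conflict, preserving genuine linearity of the diameter rather than permitting a super-linear blow-up.
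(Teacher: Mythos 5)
There is a genuine gap at the heart of your sweeping procedure. You claim that a compatible spare color $c^*$ exists for a typical vertex $v$ because ``the edges through $v$ forbid at most $\deg(v)$ spare colors, and $\deg(v)\ll\beta=3\log^{3k}d$ for a typical $v$.'' But in $H_{n,m;k}$ with $m=dn/k$ the \emph{typical} degree is $\approx d$, which is far \emph{larger} than $\beta=3\log^{3k}d$; the whole point of the parameter choice \eqref{defab} is that $\beta$ is tiny compared to $d$ while $\alpha\approx((k-1)d/\log d)^{1/(k-1)}$ colors handle the bulk of the graph. So your parking step can fail for essentially every vertex, not just for an exceptional dense set, and no union bound over rare ``rigid obstructions'' can rescue it --- indeed, since $d=O(1)$, a failure probability of order $d^{-\Omega(\beta)}$ is a constant, not $o(n^{-c})$, so your proposed calibration cannot beat the $\operatorname{poly}(n)$ enumeration of patches. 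The paper avoids this by never recoloring the bulk one vertex at a time against its full degree: it first shows ($(\alpha,\beta)$-colorability, Lemmas \ref{indeph} and \ref{densityh}) that \emph{any} sequence of $\alpha$ maximal independent sets covers all but $n_0=o(n)$ vertices and that the uncovered remainder has no $\beta$-core. The $\alpha$ primary colors are then assigned along independent sets (where recoloring is always safe), and the parking-with-spare-colors argument (Lemma \ref{core}) is applied \emph{only} to the core-free remainder, ordered by peeling so that each vertex has at most $\beta-1$ relevant back-neighbors --- that is the quantity that must be below $\beta$, not the degree.

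A secondary issue: you route every coloring to a single canonical $\sigma_0$, whereas the paper must work with the whole family of ``good greedy colorings'' (there is no canonical one reachable from everywhere by local moves) and connects any two of them by induction on $\alpha$, stripping one maximal independent set at a time (Lemma \ref{final}); the spare color freed at each stage is what makes the induction go through. Your instinct for part (ii) --- that each vertex is touched $O(1)$ times, giving an $O(n)$ path --- is in the right spirit, but the paper obtains the linear diameter by invoking the Bousquet--Perarnau recoloring theorem on the core-free part with $2\beta+2$ colors, giving $O(\alpha\beta n)=O(n)$ recolorings since $d=O(1)$. To repair your argument you would need to replace the degree-based parking criterion by the independent-set decomposition plus core-peeling structure, at which point you have essentially reconstructed the paper's proof.
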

Note that $\G_q$ connected implies that ``The Glauber Dynamics on $\Omega_q$ is ergodic''. At the moment we only know that Glauber Dynamics is rapidly mixing for $q\geq (1.76\ldots) d$, see Efthymiou, Hayes, \v{S}tefankovi\v{c} and Vigoda \cite{EHSV}. So, it would seem that the connectivity of $\G_q$ is not likely to be a barrier to randomly sampling colorings of sparse random graphs.

We note that the lower bound for $q$ is close to where the greedy coloring algorithm succeeds w.h.p.

We should note that in the case $k=2$ that Molloy \cite{M} has shown that w.h.p. there is no giant component in $\G_q$ if $q\lesssim \frac{d}{\log d}$. It is somewhat surprising therefore that w.h.p. $\G_q$ jumps very quickly from having no giant to being connected. One might have expected that $q\gtrsim \frac{d}{\log d}$ would simply imply the existence of a giant component.

Prior to this paper, it was shown in \cite{DFFV} that w.h.p. $\G_q,q\geq d+2$ is connected. The diameter of the {\em reconfiguration graph} $\G_q(G)$ for graphs $G$ has been studied in the graph theory litrature, see Bousquet and Perarnau \cite{BP} and Feghali \cite{Feg}. They show that if the maximum sub-graph density of a graph is at most $d-\e$ and $q\geq d+1$ then  $\G_q(G)$ has polynomial diameter. Using Theorem 1 of \cite{BP} we can show a linear bound on the diameter with a small increase in the number of colors.

Theorem \ref{th1} falls into the area of ``Structural Properties of Solutions to Random Constraint Satisfaction Problems''. This is a growing area with connections to Computer Science and Theoretical Physics.  In particular, much of the research on the graph $\G_q$ has been focussed on the structure near the {\em colorability threshold}, e.g. Bapst, Coja-Oghlan, Hetterich, Rassman and Vilenchik \cite{BCHRV}, or the {\em clustering threshold}, e.g. Achlioptas, Coja-Oghlan and Ricci-Tersenghi \cite{ACR}, Molloy \cite{M}.  Other papers heuristically identify a sequence of phase transitions in the structure of $H_q$, e.g., Krz\c{a}kala, Montanari, Ricci-Tersenghi, Semerijan and Zdeborov\'a \cite{KMRSZ}, Zdeborov\'a and Krz\c{a}kala \cite{ZK}.  The existence of these transitions has been shown rigorously for some other CSPs. One of the most spectacular examples is due to Ding, Sly and Sun \cite{sly} who rigorously showed the existence of a sharp satisfiability threshold for random $k$-SAT.

Section \ref{alg} describes a property $(\a,\b)$-colorability such that if $H$ has this property then $q\geq \a+\b+1$ implies that $\G_q$ is connected. Section \ref{hcase} proves that $H_{n,m;k},k\geq 2$, is $(\a,\b)$-colorable for $\a,\b$ defined in \eqref{defab}.

The paper uses some of the ideas from \cite{AFP} which showed there is a giant component in $\G_q(G_{n,m}),\,m=dn/2$ w.h.p. when $q\geq cd/\log d$ for $c>3/2$.
\section{Outline argument}
We show that with the values $\a\approx ((k-1)d/\log d)^{1/(k-1)}\gg \b$ given in \eqref{defab} then w.h.p. $H=H_{n,m;k}$ has the property that {\bf any} greedy coloring of $H$ will need at most $\a$ maximal independent sets before being left with a graph without a $\b$-core. (See Lemma \ref{indeph}.)  We call the colorings found in this way, {\em good greedy colorings} and we refer to this property as $(\a,\b)$-colorability. It follows from this, basically using the argument from \cite{AFP}, that if $\s\in \Omega_q$ and $q\geq \a+\b+1$ then there is a {\em good} path in $\G_q$ to some good greedy coloring $\s_1$. 

Suppose now that $\s_1,\t_1$ are good greedy colorings. If $q\geq \a+\b+1$ then there is a color $c$ that is not used by $\s_1$. From $\s_1$ we move to $\s_2$ by re-coloring vertices colored 1 in $\s_1$ by $c$. Then we move from $\s_2$ to $\s_3$ by coloring with color 1, all vertices that have color 1 in $\t_1$. At this point, $\s_3$ and $\t_1$ agree on color 1. $\s_3$ may use $\a+\b+1$ colors and so we move by a good path from $\s_3$ to a coloring $\s_4$ that uses at most $\a+\b+1$ colors and does not change the color of any vertex currently with color 1. Here we use the fact that $H_{n,m;k}$ is $(\a,\b)$-colorable. After this, it is induction that completes the proof.
\section{$(\a,\b)$-colorability}\label{alg}
The degree of a vertex $v\in V$ in a hypergraph $H=(V,E)$ is the number of edges $e\in E$ such that $v\in e$. (For completeness, we will state several things in this short paper that one might think can be taken for granted.)

Let $H=(V,E)$. A $\beta$-core of $H$ is a maximal subgraph of $H$ in which every vertex has degree at least $\beta$. For every $U\subset V$, if the subgraph of $H$ induced by $U$ does not have a $\beta$-core then there is an ordering $\set{u_1, u_2,...,u_{|U|}}$ of the vertices in $U$ such that every vertex in $U$ has at most $\beta-1$ neighbors that precede it in that ordering.

If a hypergraph $H$ that does not have a $\beta$-core then we can color it with at most $\beta$ colors. Let $v_1,v_2,...,v_n$ be an ordering on $V$ where 
\mult{order}{
\text{for every $i$, there are at most $\beta-1$ edges that contain $v_i$}\\
\text{ and are contained in $\set{v_1,v_2,\ldots,v_i}$.}
}
Such an ordering must exist when there is no $\b$-core. We color the vertices in the order $v_1,v_2,\ldots,v_n$ and assign to $v_i$ a color that is not {\em blocked} by the $\beta-1$ neighbors that precede it. A color $c$ is blocked for vertex $v$ by vertices $w_1,w_2,\ldots,w_{k-1}$ if $e=\set{v,w_1,\ldots,w_{k-1}}\in E(H)$ and $w_1,w_2,\ldots,w_{k-1}$ have already been given color $c$.

Next let $V_1,V_2,\ldots,V_\a$ be a sequence of independent sets of $H$ such that for each $j\geq 1$, $V_{j}$ is maximal in the sub-hypergraph $H_{j}$ induced by $V\setminus \underset{1\leq i<j}{\bigcup}V_i$. We say that such a sequence is a {\em maximally independent sequence of length $\a$}. Note that we allow $V_j=\emptyset$ here, in order to make our sequences of length exactly $\a$.
\begin{definition}
We say that a hypergraph $H$ is $(\alpha,\beta)$-colorable if there {\bf does not exist} a maximally independent sequence of length $\a$ such that $V\setminus \underset{i\leq \a}{\bigcup}V_i$ has a $\b$-core.
\end{definition}
The main result of this section is the following.
\begin{theorem}\label{semigreedy}
Let $H$ be $(\alpha,\beta)$-colorable and let $q\geq \alpha+\beta +1$. Then $\Gamma_q(H)$ is connected.
\end{theorem}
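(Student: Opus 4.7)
The plan is to designate a family of canonical colorings, the \emph{good greedy colorings} of $H$, and to prove (A) that every $\sigma\in\Omega_q$ is joined in $\G_q$ to some good greedy coloring, and (B) that any two good greedy colorings are joined in $\G_q$. A good greedy coloring is obtained by fixing a maximally independent sequence $V_1,\dots,V_\alpha$, painting $V_j$ with color $j$, and greedily coloring the residue $W=V\setminus\bigcup_j V_j$ with colors in $\{\alpha+1,\dots,\alpha+\beta\}$ along an ordering satisfying \eqref{order}. The hypothesis that $H$ is $(\alpha,\beta)$-colorable guarantees $W$ has no $\beta$-core, so the required ordering and $\beta$-coloring exist.

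\emph{Step A (reaching a good greedy coloring).} Fix $\sigma\in\Omega_q$, a maximally independent sequence $V_1,\dots,V_\alpha$ of $H$, and an ordering of $W$ as in \eqref{order}. I process the colors $j=1,\dots,\alpha$ in order. For each $j$, I first displace every vertex outside $V_j$ currently carrying color $j$, moving each through a single recoloring to the spare color $c=\alpha+\beta+1$ (or, if $c$ is temporarily blocked, to another unblocked color first); then I paint the members of $V_j$ with color $j$ one vertex at a time, which is legal because $V_j$ is independent and by the preceding displacement no vertex outside $V_j$ still carries color $j$. Once all of $V_1,\dots,V_\alpha$ are processed, I walk through $W$ in the order of \eqref{order}, recoloring each $w_i$ to some unblocked color in $\{\alpha+1,\dots,\alpha+\beta\}$; such a color exists because $w_i$ has at most $\beta-1$ earlier neighbors in $W$ and therefore at most $\beta-1$ blocked colors.

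\emph{Step B (connecting two good greedy colorings).} Let $\sigma^*$ and $\tau^*$ be good greedy colorings. Since $\sigma^*$ uses at most $\alpha+\beta$ colors and $q\ge\alpha+\beta+1$, a color $c$ is unused by $\sigma^*$. Recolor $(\sigma^*)^{-1}(1)$ to $c$ vertex by vertex (legal since $(\sigma^*)^{-1}(1)$ is independent and $c$ is empty), then recolor $(\tau^*)^{-1}(1)$ to $1$ vertex by vertex (legal since $(\tau^*)^{-1}(1)$ is independent), producing $\sigma_3$ that agrees with $\tau^*$ on color $1$. Now apply Step~A to $\sigma_3$ inside the sub-hypergraph $H'$ induced on $V\setminus(\tau^*)^{-1}(1)$ with palette $\{2,\dots,q\}$, keeping color $1$ frozen. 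This is permissible because $(\tau^*)^{-1}(1)$ is the first block of some maximally independent sequence of $H$ realizing $\tau^*$, so prepending $(\tau^*)^{-1}(1)$ to any maximally independent sequence of $H'$ yields a maximally independent sequence of $H$; hence $(\alpha,\beta)$-colorability of $H$ gives $(\alpha-1,\beta)$-colorability of $H'$, while the remaining palette has size $q-1\ge(\alpha-1)+\beta+1$. Iterating this alignment across colors $2,3,\dots,\alpha+\beta$ brings $\sigma^*$ to $\tau^*$.

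\emph{Principal obstacle.} The technical heart of the argument is verifying the legality of every single-vertex recoloring in Step~A, especially when loading and unloading the spare color $c$: one must ensure that $c$ never saturates an edge, which forces a careful scheduling of which vertex is moved where and when. The room to carry this out is exactly what $q\ge\alpha+\beta+1$ affords beyond the minimum count $\alpha+\beta$ of colors needed to construct a good greedy coloring; in particular, when parking a vertex on $c$ would create a monochromatic edge one must route through an alternative intermediate color. The inherited $(\alpha-1,\beta)$-colorability of $H'$ invoked in Step~B is a second point that needs justification, but it follows directly from unwinding the definition once the prepending observation above is made.
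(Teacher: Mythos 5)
Your high-level architecture coincides with the paper's: designate good greedy colorings as canonical targets, show every coloring reaches one, and connect any two of them by aligning color classes one at a time and recursing on the sub-hypergraph (your prepending observation is exactly the paper's Lemma \ref{reduction}). However, Step A contains a genuine gap at the point you yourself flag as the ``principal obstacle,'' and the obstacle is not surmountable as stated. You fix an \emph{arbitrary} maximally independent sequence $V_1,\dots,V_\alpha$ and then try to displace every vertex $v\notin V_j$ currently colored $j$ to a spare or otherwise unblocked color. Nothing in $(\alpha,\beta)$-colorability bounds the degree of $v$, and with $q=\alpha+\beta+1$ a vertex of degree at least $q-1$ can have every color other than its current one blocked, so that no single-step recoloring of $v$ is proper and the displacement cannot even begin. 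This is not a pathological case: in $G_{n,m}$ with $m=dn/2$ and $q\approx d/\log d$ a typical vertex has degree about $d\gg q$ and sees all $q$ colors in its neighborhood. The paper avoids displacement entirely by choosing the independent sets \emph{adaptively to the starting coloring}: in Lemma \ref{inter}, $V_\ell$ is taken to be a maximal independent set containing $C_\ell\setminus V_{<\ell}$, where $C_\ell$ is the $\ell$-th color class of $\chi$, so that turning $\chi$ into $\chi'$ only ever recolors vertices of $V_\ell\setminus C_\ell$ to color $\ell$, which is always legal because all vertices of color $\ell$ at that moment lie inside the independent set $V_\ell$. This adaptive choice is the missing idea.

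A second gap sits in your treatment of the residue $W$. When you recolor $w_i$ to a color in $\{\alpha+1,\dots,\alpha+\beta\}$, a color is blocked not only by edges inside $\{w_1,\dots,w_i\}$ but by any edge through $w_i$ whose other vertices currently carry that color --- including later-ordered vertices of $W$ and parked vertices still holding their original colors --- so the count ``at most $\beta-1$ blocked colors'' is not justified. The same issue recurs in the base case of Step B, where two colorings must be connected on a $\beta$-core-free set. The paper's Lemma \ref{core} resolves this with an induction along the degeneracy ordering that converts a legal recoloring sequence for the prefix $\{v_1,\dots,v_k\}$ into one for $\{v_1,\dots,v_{k+1}\}$ of at most $2s+1$ steps, rerouting the single potentially blocking vertex $v_{k+1}$ to one of the $\beta+1$ colors in $[\alpha+\beta+1]\setminus[\alpha]$ not blocked by its at most $\beta-1$ predecessors; intermediate colorings are only required to be proper on the induced subgraph $V\setminus\{v_{k+1},\dots,v_r\}$. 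Without this (or an equivalent scheduling device), both your $W$-phase and your iteration in Step B remain unproved.
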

Later, in Section \ref{hcase} we will show that $H_{n,m;k},k\geq 2$ is $(\alpha,\beta)$-colorable, for a suitable values of $m,\a,\b$.
\begin{lemma}\label{reduction}
Let $H=(V,E)$ be an $(\alpha,\beta)$-colorable hypergraph and $V_1\subseteq V$ be a maximal independent set of $V$. Set $V'=V\setminus V_1$ and let $H'$ be the subgraph of $H$ induced by $V'$. Then $H'$ is $(\alpha-1, \beta)$-colorable.
\end{lemma}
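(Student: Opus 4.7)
The plan is to proceed by contrapositive: assume $H'$ fails to be $(\alpha-1,\beta)$-colorable and construct, by prepending $V_1$ to the bad sequence in $H'$, a witness that $H$ fails to be $(\alpha,\beta)$-colorable.

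First I would unpack the failure in $H'$: there exists a maximally independent sequence $W_1,W_2,\ldots,W_{\alpha-1}$ in $H'$ (so $W_j$ is maximal independent in the sub-hypergraph of $H'$ induced by $V'\setminus\bigcup_{i<j}W_i$) such that the residual $R=V'\setminus\bigcup_{i=1}^{\alpha-1}W_i$ induces a subgraph containing a $\beta$-core.

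Next, set $V_1'=V_1$ and $V_{j+1}'=W_j$ for $1\le j\le \alpha-1$, producing a candidate sequence of length $\alpha$ in $H$. I would verify the defining conditions: (i) $V_1'=V_1$ is maximal independent in $H$ by hypothesis; (ii) for $j\ge 2$ one has $V\setminus\bigcup_{i<j}V_i'=V'\setminus\bigcup_{i<j-1}W_i$, and because the sub-hypergraph of $H$ induced on this set is identical to the sub-hypergraph of $H'$ induced on the same set (edges of $H$ lying inside $V'$ are exactly edges of $H'$), $V_j'=W_{j-1}$ is indeed maximal independent there. Finally, the residual after removing the whole new sequence is $V\setminus(V_1\cup W_1\cup\cdots\cup W_{\alpha-1})=R$, which has a $\beta$-core; this contradicts the $(\alpha,\beta)$-colorability of $H$.

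The only real bookkeeping point, and hence the ``hard'' step, is the observation that removing $V_1$ first and then the $W_i$ successively yields exactly the same chain of induced sub-hypergraphs as the $W_i$-recursion inside $H'$; once this is recorded, the lemma reduces to a one-line relabeling. No quantitative estimates, no randomness, and no properties of $\beta$-cores beyond monotonicity under taking the same induced subgraph are needed.
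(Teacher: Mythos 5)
Your proposal is correct and is essentially identical to the paper's own proof: both argue by contradiction/contraposition, prepend $V_1$ to the offending sequence in $H'$, and observe that the residual set and its $\beta$-core are unchanged. Your explicit check that the induced sub-hypergraphs of $H$ and $H'$ agree on subsets of $V'$ is a detail the paper leaves implicit, but the argument is the same.
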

\begin{proof}
Assume that $H'$ in not $(\alpha-1,\beta)$-colorable. Then there exists a partition of $V'$ into $V_1',...,V_{\alpha-1}'$ such that for $j\in [\alpha-1]$, $V_j'$ is a maximal independent set of $V'\setminus \underset{\ell<j}{\bigcup}V_\ell'$ and $W'=V'\setminus \underset{\ell\leq \alpha-1}{\bigcup}V_\ell'$ has a $\beta$-core. For $j\in[\alpha-1]$ set $V_{j+1}=V_j'$. Furthermore set $W=V\setminus (\underset{\ell\leq \alpha}{\bigcup}V_\ell)=V'\setminus (\underset{\ell\leq \alpha-1}{\bigcup}V_\ell')=W'$. Then
$V_1,...,V_\alpha$ is a maximal independent sequence of length $\a$ and $W$ has a $\beta$-core which contradicts the fact that $H$ is $(\alpha, \beta)$-colorable. 
\end{proof}
\begin{lemma}\label{core}
Let $H$ be a hypergraph, $\alpha,\beta\geq 0$ and $q\geq \alpha+\beta+1$. Let $W\subseteq V$ be such that the subgraph of $H$ induced by $W$ has no $\beta$-core. Furthermore let $\chi$ and $\tau$ be two colorings of $H$ such that 
\begin{enumerate}[(i)]
\item They agree on $V\setminus W$. 
\item They use only $\alpha$ colors on the vertices in $V\setminus W$.
\item $\tau$ uses at most $\beta$ colors on $W$ that are distinct from the ones it uses on $V\setminus W$.
\end{enumerate}
Then there exists a path from $\chi$ to $\tau$ in $\Gamma_q(H)$. 
\end{lemma}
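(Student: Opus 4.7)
My strategy is to use a buffer color $c^* \in [q] \setminus (A \cup B)$, where $A$ is the set of (at most $\alpha$) colors $\tau$ uses on $V \setminus W$ and $B$ is the set of (at most $\beta$) additional $\tau$-colors on $W$; such $c^*$ exists because $q \ge \alpha + \beta + 1 \ge |A \cup B| + 1$. Using the no-$\beta$-core hypothesis and the ordering property \eqref{order}, I would fix an enumeration $W = \{w_1, \ldots, w_m\}$ so that each $w_i$ lies in at most $\beta - 1$ hyperedges of $H$ contained in $W_i := \{w_1, \ldots, w_i\}$.

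The plan is to construct the path by processing the vertices in reverse order $w_m, w_{m-1}, \ldots, w_1$, moving each $w_i$ to $\tau(w_i)$ in at most two single-vertex recolorings (an intermediate step to $c^*$ if a direct move is blocked, followed by the move to $\tau(w_i)$), and then running a cleanup pass. The key structural dichotomy is: a hyperedge $e \ni w_i$ is either Type A (contained in $W_i$; at most $\beta - 1$ such by the degeneracy bound) or Type B (containing at least one vertex outside $W_i$). At reverse step $i$, every vertex of $V \setminus W_i$ carries a color in $A \cup B$: vertices of $V \setminus W$ keep their common $\tau$-color throughout, while each previously processed $w_j$ (with $j > i$) now carries either $\tau(w_j)$ or $c^*$. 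Once every vertex of $W$ carries either its $\tau$-color or $c^*$, the cleanup step that flips each $c^*$-vertex $v$ to $\tau(v)$ is automatic: a blocked move would require every other vertex of some hyperedge to be at $\tau(v)$, which propriety of $\tau$ forbids once the only colors in play are $\tau$-colors and $c^*$.

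The hard part is showing that the main pass always makes progress, i.e., at each step $i$ at least one of $\tau(w_i)$ or $c^*$ is available for $w_i$. I plan to precede the main pass with a short preliminary pass that evicts $c^*$ from $\chi$ on $W$, recoloring any $v \in W$ with $\chi(v) = c^*$ to a color in $A \cup B$ using the same degeneracy ordering. After this preprocessing, a Type B hyperedge can never be monochromatic in $c^*$, because its outside-$W_i$ vertex carries a color in $A \cup B \neq c^*$; so the move $w_i \to c^*$ can be obstructed only by a Type A hyperedge whose $k-1$ other vertices have all been moved to $c^*$, while the direct move $w_i \to \tau(w_i)$ can be blocked only by the very constrained configurations on $\{w_1, \ldots, w_{i-1}\}$ arising from at most $\beta-1$ Type A edges plus a Type B $\chi$-vs-$\tau$ mismatch ruled out by propriety of $\tau$. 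The main technical obstacle is to combine these bounds — using the degeneracy count $\beta-1$, the structural constraints on Type B edges, and the slack $q - |A \cup B| \geq 1$ — to show that the blocked colors at $w_i$ never exhaust both $\tau(w_i)$ and $c^*$ simultaneously, so that the two-step move always succeeds and the algorithm terminates with every vertex at its $\tau$-color.
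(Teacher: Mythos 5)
Your proposal has a genuine gap, and it sits exactly where you flag ``the main technical obstacle'': with a single buffer color $c^*$ the progress claim is not merely unproven, it fails. The partial justification you offer is also inconsistent with your own invariant. You state that after step $i$ each processed vertex $w_j$ ($j>i$) ``carries either $\tau(w_j)$ or $c^*$'', yet you argue that a Type B hyperedge cannot be monochromatic in $c^*$ because its outside-$W_i$ vertex carries a color in $A\cup B$; but an outside-$W_i$ vertex can be a processed $w_j$ parked at $c^*\notin A\cup B$, so a Type B edge consisting of $w_i$ together with parked vertices does block the move $w_i\to c^*$. Concretely, already for graphs: $w_i$ may have an unprocessed back-neighbour currently coloured $\tau(w_i)$ (blocking the direct move) and simultaneously a processed forward-neighbour $w_j$ stuck at $c^*$ --- stuck because its own second step from $c^*$ to $\tau(w_j)$ was blocked by a then-unprocessed vertex coloured $\tau(w_j)$, which is why your scheme needs a cleanup pass at all --- and $w_j$ blocks the move $w_i \to c^*$. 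Both of your allowed moves are then blocked and the pass stalls. The root problem is that one parking colour is shared by all of $W$, whereas the degeneracy bound only controls the $\beta-1$ edges of each vertex into its own prefix.

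The paper's proof avoids this by using all $\beta+1$ spare colours $[\alpha+\beta+1]\setminus[\alpha]$ as parking spots, and by running an induction on the prefix $\{v_1,\dots,v_{i}\}$ of the degeneracy ordering rather than a single greedy pass: given a recoloring sequence that is proper on the subhypergraph induced by $V\setminus\{v_{k+1},\dots,v_r\}$, each move that would create a conflict through $v_{k+1}$ is preceded by shunting $v_{k+1}$ to one of its unblocked spare colours, of which at least two always exist since at most $\beta-1$ of the $\beta+1$ spare colours can be blocked by its back-neighbouring edges. Crucially, intermediate colorings need only be proper on the growing induced subhypergraph, which is what lets the induction close. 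To salvage your single-pass idea you would need a per-vertex choice among $\beta+1$ parking colours rather than one global $c^*$, together with an argument that parked vertices can always be flushed before they obstruct later vertices; at that point you are essentially rebuilding the paper's induction.
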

\begin{proof}
Without loss of generality we may assume that $\chi$ and $\tau$ use $[\alpha]$ to color $V\setminus W$. The proof that follows is an adaptation to hypergraphs of the proof in \cite{AFP} that $\G_q(G)$ is connected when a graph $G$ has no $q$-core. Because $W$ has no $\beta$-core there exists an ordering of its vertices, $v_1,v_2,...,v_r$, such that for $i\in [r]$, $v_i$ has at most $\beta-1$ neighbors in $v_1,v_2,\ldots,v_{i-1}$. For $0\leq i\leq r$ let $\tau_i$ be the coloring that agrees with $\tau$ on $\set{v_1,...,v_i}$ and with $\chi$ on $W\setminus \set{v_1,...,v_i}$. On $V\setminus W$ it agrees with both. Thus $\t_0=\chi$ and $\t_r=\t$.

We proceed by induction on $i$ to show that there is a sequence of colorings $\Sigma_i$ from $\chi$ to $\t_i$ such that (i) going from one coloring to the next in $\Sigma_i$ only re-colors one vertex and (ii) all colorings in the sequence $\Sigma_i$ are proper for the hypergraph induced by $V\setminus \set{v_{i+1},...,v_r}$. We {\bf do not} claim that the colorings in $\Sigma_i,i<r$ are proper for $H$. On the other hand, taking $i=r$ we get a sequence of $H$-proper colorings that starts with $\chi$, ends with $\tau$, such that the consecutive pairs of proper colorings differ on a single vertex. Clearly, such a sequence corresponds to a path from $\chi$ to $\tau$ in $\Gamma_q(H)$. 

The case $i=1$ is trivial as we have assumed that $\s,\t$ agree on $V\setminus W$ and so we can give $v_1$ the color $\t(v_1)$. Assume that the assertion is true for $i=k\geq 1$ and let  $\chi=\psi_0,\psi_1,\ldots,\psi_s =\t_k$ be a sequence of colorings promised by the inductive ssertion. Let $(w_j,c_j)$ denote the $(vertex,color)$ change defining the change from $\psi_{j-1}$ to $\psi_j$. We construct a sequence of colorings of length at most $2s+1$ that yields the assertion for $i=k+1$. For $j=1,2,\ldots,s$, we will re-color $w_j$ to color $c_j$, unless there exists a set $X$  such that $X\cup \set{w_j}\in E$ and $\psi_{j-1}(x)=c_j,x\in X\subseteq \set{v_1,v_2,\ldots,v_{k+1}}$. The fact that $\psi_j$ is a proper coloring of $V\setminus \set{v_{k+1},...,v_r}$ implies that $v_{k+1}\in X$. Because $v_{k+1}$ has at most $\beta-1$ neighbors in $\set{v_1,...,v_k}$ and $\tau$ only uses colors in $[\alpha]$ to color $V\setminus W$, there exists a color $c'\neq c_j$ for $v_{k+1}$ in $[\alpha+\beta+1]\setminus [\alpha]$ which is not blocked by a subset of $\set{v_1,v_2,\ldots,v_k}$ and is diferent from its current color. We first re-color $v_{k+1}$ to $c'$ and then we re-color $w_j$ to $c_j$, completing the inductive step. At the very end, i.e. at step $2s+1$ we give $v_{k+1}$ its color in $\t$.
\end{proof}
\begin{definition}
A coloring with color sets $V_1,V_2,\ldots,V_{ \a+\b}$ is said to be a {\em good greedy coloring} if  (i) $V_1,V_2,\ldots,V_\a$ is a maximally independent sequence of length $\a$ and (ii) $V\setminus \underset{\ell\leq \a}{\bigcup}V_\ell$ has no $\b$-core.
\end{definition}
We prove Theorem \ref{semigreedy} in two steps. In Lemma \ref{inter}, we show that if $q\geq \a+\b+1$ and $H$ is $(\a,\b)-colorable$ then we can reach a good greedy coloring in $\Gamma_q(H)$ starting from any coloring. Then in Lemma \ref{final}, we show that if $q\geq \a+\b+1$ then any good greedy coloring $\t$ can be reached in $\G_q(H)$ from any other good greedy coloring $\s$.
\begin{lemma}\label{inter}
Let $H$ be an $(\alpha,\beta)$-colorable hypergraph, $q\geq \alpha+\beta+1$ and $\chi$ be a $[q]$-coloring of $H$. Then there exists a good greedy coloring $\tau$  of $H$ such that there exists a path in $\Gamma_q(H)$  from $\chi$ to $\tau$.   
\end{lemma}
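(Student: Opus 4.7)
The plan is to proceed by induction on $\alpha$. The base case $\alpha=0$ says that $V$ itself has no $\beta$-core, so I would build any good greedy coloring $\tau$ of $H$ (using at most $\beta$ colors) directly from an ordering of $V$ as in \eqref{order}, and then invoke Lemma \ref{core} with $W=V$: conditions (i), (ii) are vacuous and (iii) holds by construction, yielding a path from $\chi$ to $\tau$ in $\Gamma_q(H)$.

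For the inductive step, my first move would be to choose a maximal independent set $V_1$ of $H$ that contains the current color class $C_1=\chi^{-1}(1)$; such a $V_1$ exists because $C_1$ is itself independent. The first sub-path will recolor each vertex of $V_1\setminus C_1$ in turn to color $1$. Each such single-vertex move will be legal, since at every intermediate step the color class of $1$ remains a subset of $V_1$, so if color $1$ were blocked at some $v\in V_1\setminus C_1$ by an edge $e\ni v$, then $e\subseteq V_1$, contradicting the independence of $V_1$. Call the resulting coloring $\chi'$; its color class of $1$ equals $V_1$.

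Next I would set $V'=V\setminus V_1$ and $H'=H[V']$; by Lemma \ref{reduction}, $H'$ is $(\alpha-1,\beta)$-colorable, and $\chi'|_{V'}$ is a proper $\{2,\ldots,q\}$-coloring of $H'$ using a palette of size $q-1\geq (\alpha-1)+\beta+1$. Applying the inductive hypothesis produces a path in the reconfiguration graph of $H'$ from $\chi'|_{V'}$ to a good greedy coloring $\tau'$ of $H'$. This path lifts directly to $\Gamma_q(H)$: the vertices of $V_1$ keep color $1$ throughout and the lifted moves only assign colors $\geq 2$, so any newly monochromatic edge of $H$ would avoid $V_1$ and already be monochromatic in $H'$, contradicting properness there. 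Concatenating with the earlier sub-path yields a path from $\chi$ to a coloring $\tau$ whose color classes are $V_1$ together with those of $\tau'$. Then $\tau$ is a good greedy coloring of $H$, because prepending $V_1$ to the maximally independent sequence witnessing $\tau'$ in $H'$ produces a maximally independent sequence of length $\alpha$ in $H$, and the leftover vertex set (which is identical in $H$ and $H'$) has no $\beta$-core.

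The point requiring the most care will be the lifting argument: it crucially uses that $V_1$ was chosen to be a \emph{maximal} independent set and not merely one containing $C_1$, so that no hyperedge is contained in $V_1$ and color $1$ can never be re-blocked by moves occurring on $V'$. Beyond that, the only bookkeeping is the (harmless) relabeling of the palette $\{2,\ldots,q\}$ into $[q-1]$ when invoking the inductive hypothesis, and the verification that good greedy status is preserved when we prepend $V_1$ — both of which follow directly from the definitions.
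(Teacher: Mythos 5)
Your proof is correct, and it reaches the same coloring $\tau$ the paper constructs, but it is organized differently. The paper's proof is non-recursive: it builds the entire maximally independent sequence $V_1,\ldots,V_\alpha$ up front (each $V_\ell$ a maximal independent set containing $C_\ell\setminus V_{<\ell}$), recolors in $\alpha$ rounds to reach $\chi'$, and then makes a single application of Lemma \ref{core} with $W=V\setminus\bigcup_{i\le\alpha}V_i$, using hypotheses (i)--(iii) of that lemma to keep the already-placed classes frozen. You instead induct on $\alpha$, peeling off one maximal independent set at a time via Lemma \ref{reduction} (which the paper invokes only in Lemma \ref{final}) and pushing Lemma \ref{core} down to the base case $W=V$; the price is that you must supply the lifting argument showing a path in $\Gamma_{q-1}(H\setminus V_1)$ remains a path in $\Gamma_q(H)$, which you do correctly. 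Unrolling your recursion recovers essentially the paper's sequence of moves, so the two arguments are equivalent in substance; yours trades the bookkeeping inside Lemma \ref{core} for an explicit induction, and it makes the preservation of the good-greedy structure (prepending $V_1$) more transparent. One small correction to your closing remark: the fact that no hyperedge lies inside $V_1$, hence that color $1$ is never re-blocked during the lifted moves, follows from independence alone; maximality of $V_1$ is what you actually need for Lemma \ref{reduction} and for the prepended sequence to be \emph{maximally} independent, both of which you do use.
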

\begin{proof}
We generate the coloring $\tau$ as follows. Let $C_1,C_2,\ldots,C_q$ be the color classes of $\chi$. Then let $V_1\supseteq C_1$ be a maximal independent set containing $C_1$. In general, having defined $V_1,V_2,\ldots,V_{\ell-1}$ we let $V_{<\ell}= \underset{1\leq i<\ell}{\bigcup} V_i$ and then we let $V_\ell$ be a maximal independent set in $V\setminus V_{<\ell}$ that contains $C_\ell\setminus V_{<\ell}$. Thus $V_1,V_2,\ldots,V_\a$ is a maximal independent sequence of length $\a$. We now describe how we transform the coloring $\chi$ vertex by vertex into a coloring $\chi'$ in which vertices in $V_i$ get color $i$ for $1\leq i\leq \a$. We first re-color the vertices in $V_1\setminus C_1$ by giving them color 1, one vertex at a time. The coloring stays proper, as $V_1$ is an independent set. In general, having re-colored  $V_1,V_2,\ldots,V_{\ell-1}$ we re-color the vertices in $V_\ell\setminus C_\ell$ with color $\ell$. Again, the coloring stays proper, as $V_\ell$ is an independent set, containing all vertices in $C_\ell$ that have not been re-colored. We observe that each re-coloring of a vertex $v$ done while turning $\chi$ into $\chi'$ can be interpreted as moving from a coloring in $\Gamma_q(H)$ to a neighboring coloring.

Let $W= V\setminus \underset{1\leq i < \ell}{\bigcup} V_i$. Because $H$ is $(\alpha,\beta)$-colorable, we find that $W$ has no $\beta$-core. Because $W$ has no $\beta$-core there exists a proper coloring $\t'$ of the subgraph of $H$ induced by $W$ that uses only colors in $[\alpha+\beta+1]\setminus [\alpha]$. Set $\tau$ to be the coloring that agrees with $\chi'$ on $V\setminus W$ and with $\tau'$ on $W$. 

Lemma \ref{core} implies that there is a path from $\chi'$ to $\tau$. Hence there is a path from $\chi$ to $\tau$.
\end{proof}
\begin{remark}\label{rem1}
In the proof of Lemma \ref{inter} we see that each vertex is re-colored at most twice before we apply Lemma \ref{core}. Thus this part of the proof yields at most $\a$ distinct sub-paths of length $O(n)$.
\end{remark}

\begin{lemma}\label{final}
Let $H$ be an $(\alpha,\beta)$-colorable hypergraph, $q\geq \alpha+\beta+1$ and let $\chi,\t$ be two good greedy colorings. Then there exists a path from $\chi$ to $\tau$ in $\Gamma_q(H)$.
\end{lemma}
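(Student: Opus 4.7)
The plan is to prove Lemma \ref{final} by induction on $\alpha$, realizing the four-step schema from Section 2. In the base case $\alpha=0$, a $(0,\beta)$-colorable hypergraph has no $\beta$-core (the length-$0$ maximally independent sequence is empty, so $V$ itself must avoid a $\beta$-core), and any good greedy coloring uses at most $\alpha+\beta=\beta$ colors. Applying Lemma \ref{core} with $W=V$ and $V\setminus W=\emptyset$, the agreement and $\alpha$-colors-on-$V\setminus W$ hypotheses are vacuous, $\tau$ uses at most $\beta$ colors on $W$, and $q\geq\beta+1$, so a path from $\chi$ to $\tau$ exists directly.

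For the inductive step, assume the lemma for $\alpha-1$ and let $V_1^\tau$ be the first set in $\tau$'s maximally independent sequence. Because $\chi$ uses at most $\alpha+\beta<q$ colors, fix an unused color $c$. \textbf{(A)} Recolor the vertices of $\chi^{-1}(1)$ (an independent set) one at a time to $c$, obtaining $\chi_1$ in which color $1$ is unused. \textbf{(B)} Recolor the vertices of $V_1^\tau$ one at a time to color $1$. Properness is preserved: all vertices currently of color $1$ lie in the independent set $V_1^\tau$, so no edge can become monochromatic of color $1$, and removing a vertex from its prior color class can only destroy monochromatic edges. The resulting coloring $\chi_2$ satisfies $\chi_2^{-1}(1)=V_1^\tau$ and agrees with $\tau$ on $V_1^\tau$. \textbf{(C)} Set $H'=H[V\setminus V_1^\tau]$; by Lemma \ref{reduction} it is $(\alpha-1,\beta)$-colorable, and $q-1\geq(\alpha-1)+\beta+1$. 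The restriction $\chi_2|_{V(H')}$ is a proper coloring of $H'$ with palette $[q]\setminus\{1\}$, so Lemma \ref{inter} produces a good greedy coloring $\chi_3$ of $H'$ together with a path in $\Gamma_{q-1}(H')$ from $\chi_2|_{V(H')}$ to $\chi_3$. \textbf{(D)} Observe that $\tau|_{V(H')}$ is also a good greedy coloring of $H'$: the tail $V_2^\tau,\ldots,V_\alpha^\tau$ is a maximally independent sequence of length $\alpha-1$ in $H'$, and $V(H')\setminus\bigcup_{i=2}^{\alpha}V_i^\tau=V\setminus\bigcup_{i=1}^{\alpha}V_i^\tau$ has no $\beta$-core by assumption on $\tau$. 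Applying the induction hypothesis to $H'$ yields a path in $\Gamma_{q-1}(H')$ joining $\chi_3$ to $\tau|_{V(H')}$. Concatenating the paths built in (A)--(D) produces the required path from $\chi$ to $\tau$.

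The only genuine verification concerns the lifting of the $\Gamma_{q-1}(H')$-paths in (C) and (D) to $\Gamma_q(H)$-paths by keeping every vertex of $V_1^\tau$ fixed at color $1$. Any edge $e\in E(H)$ is either contained in $V(H')$, in which case properness of the restricted coloring of $H'$ suffices, or meets $V_1^\tau$; in the latter case, since $V_1^\tau$ is independent, $e$ also contains a vertex outside $V_1^\tau$ whose color is never $1$, so $e$ cannot be monochromatic of color $1$ nor of any other color. Consecutive lifted colorings still differ on only a single vertex, so the lift is indeed a walk in $\Gamma_q(H)$. The main obstacle is purely bookkeeping: matching the palette and the $(\alpha,\beta)$ parameters so that Lemma \ref{reduction}, Lemma \ref{inter}, and the induction hypothesis all apply to $H'$ simultaneously, and verifying that the temporary use of the fresh color $c$ in (A) does not disturb the color-$1$ manipulations in (B).
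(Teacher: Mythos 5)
Your proposal is correct and follows essentially the same route as the paper's proof: induction on $\alpha$ with the base case handled by Lemma \ref{core} applied to $W=V$, and the inductive step performed by recoloring $\chi^{-1}(1)$ to a fresh color, installing $\tau$'s first independent set as color class $1$, and then invoking Lemma \ref{reduction}, Lemma \ref{inter}, and the induction hypothesis on $H[V\setminus V_1^\tau]$ before lifting the $\Gamma_{q-1}$-paths back to $\Gamma_q(H)$. Your explicit verification that the lifted colorings remain proper (using the independence of $V_1^\tau$ and the absence of color $1$ on $V(H')$) is a point the paper states only briefly, but the argument is the same.
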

\begin{proof}
We proceed by induction on $\alpha$. For $\alpha=0$, $H$ is $(0,\beta)$ colorable and so it does not have a $\beta$-core. Thus the base case follows directly from Lemma \ref{core} by taking $W=V$.

Assume that the statement of the Lemma is true for $\alpha=k-1$ and let $\alpha=k$. There exists a maximal independent sequence $V_1,V_2,\ldots,V_k$ of length $k$ such that if $V'=V\setminus \underset{1\leq i\leq k}{\bigcup}V_i$ then (i) for $i \in [k]$, $\tau$ assigns the color $i$ to $v\in V_i$ and (ii) $\tau$ assigns only colors in $[k+\beta]\setminus [k]$ to vertices in $V'$.

Let $c$ be a color not assigned by $\chi$. There is one as $q\geq k+\b+1$. Starting from $\chi$ we recolor all vertices that are colored 1 by color $c$ to create a coloring $\bar{\chi}$. Then we continue from $\bar{\chi}$ by recoloring all the vertices in $V_1$ by color 1 and we let $\chi'$ be the resulting coloring. Clearly there is a path $P_1$ from $\chi$ to $\chi'$ in $\Gamma_q(H)$.

We now set  $H_1=H\setminus V_1$, and set $\chi_1',\tau_1$ to be the restrictions of $\chi',\tau$ on $H_1$. Observe that since $V_1$ is a maximal independent set, Lemma \ref{reduction} implies that $H_1$ is $(k-1,\beta)$ colorable and in addition that $\tau_1$ is a good greedy coloring of $H_1$. Lemma \ref{inter} implies that in $\Gamma_{q-1}(H_1)$ there is a path $P_2$ from  $\chi_1'$ to some good greedy coloring $\chi_1$ that uses only $k-1+\beta$ colors from $[q]\setminus \{1\}$. The induction hypothesis implies that in $\Gamma_{q-1}(H_1)$ that there is a path $P_3$ from $\chi_1$ to $\tau_1$.

Color 1 is not used in $\chi_1', \tau_1$ or in any of colorings found in the path $P_2,P_3$. Thus the path $P_2,P_3$ corresponds to a path $P_4$ in $\Gamma_q(H)$ from $\chi'$ to $\tau$.
Consequently the colorings $\chi,\tau$ are connected in $\Gamma_q(H)$ by the path $P_1+P_4$.
\end{proof}
{\emph{Proof of Theorem \ref{semigreedy}:}
Let $H$ be $(\alpha,\beta)$ colorable, $q\geq \alpha+\beta +1$, and let $\chi_1,\chi_2$ be two colorings of $H$. Lemma \ref{inter} implies that in $\Gamma_{q}(H)$, there ia path $P_i$ from $\chi_i$ to a good greedy coloring $\t_i$ for $i=1,2$. Lemma \ref{final} implies that there is a path in $\G_q(H)$ from $\t_1$ to $\t_2$.
\qed
\section{Random Hypergraphs}\label{hcase}

Theorem \ref{th1} follows from
\begin{lemma}\label{randomh}
Let  $k\geq 2$ and suppose that $q\geq \a+\b+1$ and that $d$ is sufficiently large. If $p=\frac{d}{\binom{n-1}{k-1}}$ and $m=\binom{n}{k}p$ then w.h.p. $\Gamma_q(H_{n,m;k})$ is connected.
\end{lemma}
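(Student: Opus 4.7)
By Theorem~\ref{semigreedy}, it suffices to show that $H = H_{n,m;k}$ is $(\alpha,\beta)$-colorable with high probability. The plan is a first moment bound on \emph{bad structures}, where a bad structure consists of a maximally independent sequence $V_1,\ldots,V_\alpha$ in $H$ together with a subset $K$ of $W := V\setminus\bigcup_{j} V_j$ of size $t\geq\beta+1$ which is contained in a $\beta$-core of $H[W]$. The two structural facts driving the count are: (i) for each $v\in K$ and each $j\in[\alpha]$ there is an edge $e_{v,j}\ni v$ with $e_{v,j}\setminus\{v\}\subseteq V_j$, and since the $V_j$ are pairwise disjoint, the $(k-1)$-sets $e_{v,j}\setminus\{v\}$ are pairwise disjoint across $j$; (ii) each $v\in K$ is incident to at least $\beta$ edges of $H$ lying entirely in $K$. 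So for each $v$ we are demanding the presence of $\alpha$ blocking edges and, collectively, at least $\beta t/k$ core edges.

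The expected number of bad structures is bounded by summing over: the pair $(t,K)$, contributing $\binom{n}{t}$; the partition $V\setminus K = V_1 \dun \cdots \dun V_\alpha \dun (W\setminus K)$; for each $v\in K$ the $\alpha$ disjoint blocking $(k-1)$-tuples $B_{v,j}\subseteq V_j$; and the identity of the $\beta t/k$ core edges. The probability attached to each configuration is $p^{\alpha t + \beta t/k}\cdot\prod_{j}(1-p)^{\binom{|V_j|}{k}}$, where the second factor enforces that each $V_j$ is actually independent. That last factor is essential: without it the naive bound $\binom{n}{k-1}^{\alpha t}p^{\alpha t}\approx d^{\alpha t}$ from the blocking edges blows up for large $t$. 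Writing $r_j = |V_j|/n$ and using $\binom{|V_j|}{k-1}p\leq dr_j^{k-1}$ together with $(1-p)^{\binom{|V_j|}{k}}\leq\exp(-ndr_j^k/k)$, the combined blocking/independence factor becomes $\prod_j e^{-ndr_j^k/k}(dr_j^{k-1})^t$, which by AM--GM / Lagrange multipliers is maximised at the balanced choice $r_j\approx(\log d/((k-1)d))^{1/(k-1)}$---the typical fractional size of a maximal independent set. At this optimum the exponential suppression from independence precisely compensates the blocking gain, and the formula for $\alpha$ in \eqref{defab} is calibrated so that $\alpha$ sets of this fractional size together cover essentially all of $V\setminus K$.

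The main difficulty is then verifying that the residual contribution from the $\beta$-core, roughly $\binom{n}{t}\cdot\big(et^{k-1}d/((k-1)!\beta n^{k-1})\big)^{\beta t/k}$, sums to $o(1)$ across all $t\in[\beta+1,n]$. For small $t$ with $t^{k-1}d\ll\beta n^{k-1}$ the core factor is polynomially small in $n$ and easily beats $\binom{n}{t}$. For $t$ comparable to $n$ the exponential decay $\exp(-\Theta(\beta t\log(\beta n^{k-1}/(t^{k-1}d)))/k)$ dominates $\binom{n}{t}\leq 2^n$, crucially because $\beta=3\log^{3k}d\gg\log d$ for large $d$. The delicate regime is the intermediate window around $t\asymp n(\beta/d)^{1/(k-1)}$, where the blocking/independence optimum alone is no longer decaying and one must explicitly compute the logarithm of the bound and verify negativity with room to spare, using the exact exponents in \eqref{defab}; this is the technical heart of the argument. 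The calculation closely parallels the graph case $k=2$ in \cite{AFP}, with extra bookkeeping for the $(k-1)$-sized blocking tuples.
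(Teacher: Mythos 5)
Your reduction to showing that $H_{n,m;k}$ is $(\a,\b)$-colorable w.h.p.\ and then invoking Theorem~\ref{semigreedy} is exactly the paper's route, and your identification of the two structural consequences (blocking edges from maximality, core edges from the $\b$-core) is the right starting point. However, the single first-moment bound you propose over ``bad structures'' does not close, and the gap lies in where you impose the maximality condition. You only require blocking edges for the $t$ vertices of the putative core $K$, so your union bound effectively ranges over all partitions of $V\setminus K$ into $\a$ independent sets plus a remainder. The expected number of such partitions is exponentially large in $n$: the partition entropy is roughly $e^{n\log \a}\approx d^{n/(k-1)}$, while the independence factor $\prod_j(1-p)^{\binom{|V_j|}{k}}$ is, even at the balanced partition where it is largest, only about $d^{-n/(k(k-1))}$, leaving a net factor of order $d^{n/k}$. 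Your claim that ``the exponential suppression from independence precisely compensates the blocking gain'' is not correct at this scale: the blocking condition on the $t$ core vertices has probability at most $1$ (your factor $(dr_j^{k-1})^t$ with $dr_j^{k-1}\approx \log d/(k-1)>1$ is merely an upper bound exceeding $1$), and the core factor is only $n^{-\Theta(\b t)}$ for small $t$ and is not small at all once $t$ is a constant fraction of $n$ (a linear-sized subset of $H_{n,p;k}$ with $d\gg\b$ contains a $\b$-core w.h.p.). Nothing in your bound beats $d^{n/k}$, so the sum diverges.

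What is missing is the dichotomy the paper uses. Lemma~\ref{indeph} imposes the blocking condition on \emph{every} vertex not yet covered --- and whenever $\sum_i |V_i|\leq n-n_0$ there are at least $n/\log^2d$ (resp.\ $n_0$) such vertices at each stage --- each of which independently fails to be blocked by $V_i$ with probability $(1-p)^{\binom{|V_i|}{k-1}}\approx d^{-1/(k-1)}\log^{5}d$. This yields a probability bound of the form $\exp\set{-nt\log^2 d/d^{1/(k-1)}}$, which is exponentially small in $n$ and does beat the multinomial entropy, with the margin supplied by the $5(k-1)\log\log d$ correction in \eqref{defab}. The conclusion is that w.h.p.\ \emph{every} maximal independent sequence of length $\a$ covers all but at most $n_0$ vertices; a separate, purely local density computation (Lemma~\ref{densityh}) then shows that no set of at most $n_0$ vertices spans enough edges to contain a $\b$-core, so the core condition never has to be traded off against the partition entropy at all. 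By coupling the core directly to the partition and restricting blocking to $K$, your scheme forfeits the exponential-in-$n$ decay exactly where it is needed; to repair it you would have to add the ``blocking for all uncovered vertices'' step (or an equivalent mechanism forcing $|W|\leq n_0$) before invoking any core estimate.
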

In the following we will assume for simplicity of notation that $d=O(1)$, so that $O(f(d)/n)=O(1/n)$. We do not know if there is an upper bound needed for the growth rate of $d$, but we doubt it. 

To prove Lemma \ref{randomh} we use Lemmas \ref{auxh}, \ref{indeph}, \ref{densityh} (below) in order to deduce that w.h.p. $G_{n,dn/2}$ is $(\alpha,\beta)$ colorable. Then we apply Theorem \ref{semigreedy}. (Lemmas \ref{auxh} and \ref{densityh} are hardly new or best possible, but we prove them here for completeness.)

We will do our calculations on the random graph $H_{n,p;k},p=d/\binom{n-1}{k-1}$ and use the fact for any hypergraph property $\cP$, we have
\beq{cP}{
\Pr(H_{n,m;k}\in\cP)\leq O(m^{1/2})\Pr(H_{n,p;k}\in\cP).
}
\begin{lemma}\label{auxh}
Let $p=\frac{d}{\binom{n-1}{k-1}}$ and $k\geq 2$ and $d$ sufficiently large. Then, w.h.p. $H=H_{n,p;k}$ does not contain an independent set of size $\bfrac{2k\log d}{(k-1)d}^{\frac{1}{k-1}}n$. 
\end{lemma}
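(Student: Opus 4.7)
The plan is to apply the first moment method to the random hypergraph $H_{n,p;k}$ with $p=d/\binom{n-1}{k-1}$, and then transfer the result to $H_{n,m;k}$ via \eqref{cP}.

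Let $s = \lceil \left(\tfrac{2k\log d}{(k-1)d}\right)^{1/(k-1)} n\rceil$ and let $X_s$ denote the number of independent sets of size exactly $s$ in $H_{n,p;k}$. A fixed set $S$ of size $s$ is independent precisely when none of the $\binom{s}{k}$ potential edges inside $S$ are present, so
\[
\E[X_s] \;=\; \binom{n}{s}(1-p)^{\binom{s}{k}} \;\leq\; \left(\frac{en}{s}\right)^{\!s}\exp\!\left(-p\binom{s}{k}\right).
\]
First I would take logarithms and substitute $s = cn$ with $c=\left(\tfrac{2k\log d}{(k-1)d}\right)^{1/(k-1)}$. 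Using $p = d/\binom{n-1}{k-1} = (1+o(1))\,d(k-1)!/n^{k-1}$ and $\binom{s}{k} = (1+o(1))s^k/k!$, one computes
\[
p\binom{s}{k} \;=\; (1+o(1))\,\frac{d\, c^k}{k}\, n \;=\; (1+o(1))\,\frac{2c\log d}{k-1}\, n,
\]
since $c^{k-1} = \tfrac{2k\log d}{(k-1)d}$. Meanwhile the entropy factor contributes
\[
s\log(en/s) \;=\; cn\bigl(1-\log c\bigr) \;=\; (1+o(1))\,\frac{c\log d}{k-1}\, n,
\]
because $-\log c = \tfrac{1}{k-1}\log\bigl((k-1)d/(2k\log d)\bigr) = (1+o(1))\tfrac{\log d}{k-1}$ as $d\to\infty$.

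Hence $\log \E[X_s] \leq -(1+o(1))\tfrac{c\log d}{k-1}\,n$, so for $d$ sufficiently large $\E[X_s]\to 0$ and Markov's inequality gives $\Pr(X_s\geq 1)\to 0$ in $H_{n,p;k}$. Since the property of containing an independent set of size at least $s$ is monotone (upward in non-edges, downward in edges), \eqref{cP} transfers the conclusion to $H_{n,m;k}$ with only a polynomial factor loss of $O(m^{1/2})$, which is absorbed by the super-polynomially small bound on $\E[X_s]$. The only subtlety in the calculation is verifying that the factor of $2$ in the definition of $s$ beats the entropy term by a definite margin; without that factor one would sit exactly at the threshold and the argument would fail. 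Beyond that, the estimate is a routine first-moment computation, so I do not expect any real obstacle.
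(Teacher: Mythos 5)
Your proof is correct and is essentially the same argument as the paper's: a first-moment bound $\binom{n}{s}(1-p)^{\binom{s}{k}}$ followed by the same asymptotic evaluation showing the edge-deletion term is roughly twice the entropy term. The only cosmetic difference is that you carry the computation in logarithmic $(1+o(1))$ form while the paper keeps explicit constants, and your final transfer to $H_{n,m;k}$ is harmless but not needed for the lemma as stated (it concerns $H_{n,p;k}$ directly).
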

\begin{proof}
Let $u=\bfrac{2k \log d}{(k-1)d}^{\frac{1}{k-1}}n$. The probability that there exists an independent set of size $u$ in $H$ is bounded by 
\begin{align}
\binom{n}{u}(1-p)^{\binom{u}{k}} &\leq \bfrac{en}{u}^u \exp\set{-\frac{d}{\binom{n-1}{k-1}} \cdot \binom{u}{k}}\nonumber\\
&\leq  \bfrac{en}{u}^u \exp\set{-\frac {du}{k} \bfrac{u}{n}^{k-1}\brac{1+O\bfrac{1}{n}}}\nonumber\\
& =\brac{e^{k-1}\frac{(k-1)d}{2k\log d} \cdot \exp\set{-2\log d\brac{1+O\bfrac{1}{n}} }}^{u/(k-1)}\nonumber\\
&= \brac{\frac{e^{k-1}(k-1)}{2kd\log d}\brac{1+O\bfrac{1}{n}}}^{u/(k-1)}\label{d2}\\
&=  o(1).\nonumber
\end{align}
\end{proof}
\begin{notation}
We let 
$$m_0= \frac{n}{\a}\text{ and }n_0=16m_0\log^2d.$$ 
Furthermore, for $t\leq d$ we let 
$$S_t=\set{(s_1,s_2,...,s_t) \in \bigg[ \bfrac{2k \log d}{(k-1)d}^{\frac{1}{k-1}}n \bigg]^t: 
\sum_{j=1}^t  s_i \leq \min\set{t m_0 ,n- n_0 }}.$$
\end{notation}
\begin{lemma}\label{indeph}
If $k\geq 2$ and $d$ is sufficiently large then, w.h.p. {\bf there does not exist} $1\leq t\leq d$ and disjoint sets $V_1,...,V_t\subset V$ such that:
\begin{enumerate}[(i)]
\item $V_1,V_2,\ldots,V_t$ is a maximal independent sequence of length $t$ in $H=H_{n,p;k}$.
\item $\big(|V_1|,|V_2|,...,|V_t|\big) \in S_t$.
\end{enumerate}
\end{lemma}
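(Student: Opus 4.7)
My plan is to prove Lemma \ref{indeph} by a first-moment calculation in $H_{n,p;k}$ (reducing from $H_{n,m;k}$ via \eqref{cP}). Fix $t \in [1,d]$ and $(s_1,\ldots,s_t) \in S_t$, and let $\E$ denote the expected number of ordered tuples $(V_1,\ldots,V_t)$ of disjoint subsets of $V$ with $|V_i| = s_i$ that form a maximal independent sequence. The crucial structural observation is that the two types of events---``$V_i$ is independent'', determined by potential edges contained in $V_i$; and ``$V_i$ is maximal in $V \setminus V_{<i}$'', determined by potential edges with exactly one endpoint in $V \setminus V_{\leq i}$ and $k-1$ in $V_i$---involve pairwise disjoint sets of potential edges, across both event types and all $t$ stages. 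Therefore the probability factors as
\[
\prod_{i=1}^t (1-p)^{\binom{s_i}{k}}\Bigl(1 - (1-p)^{\binom{s_i}{k-1}}\Bigr)^{r_i}, \qquad r_i = n - \sum_{j\leq i} s_j.
\]

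Bounding the number of ordered disjoint tuples by $\prod_i (en/s_i)^{s_i}$, writing $y_i = (1-p)^{\binom{s_i}{k-1}}$, and using $(1-p)^a \leq e^{-pa}$ and $(1-y)^r \leq e^{-yr}$, I obtain
\[
\log \E \leq \sum_{i=1}^t \Bigl[s_i \log(en/s_i) - p\binom{s_i}{k}\Bigr] - \sum_{i=1}^t y_i r_i.
\]
Since $s \mapsto s\log(en/s) - p\binom{s}{k}$ is concave in $s$, the first sum (for fixed $t$ and $S = \sum s_i$) is maximized by the uniform choice $s_i = S/t$. An exchange argument indicates that $\sum_i y_i r_i$ is also minimized by the uniform choice: swapping two unequal $s_i, s_j$ for their common average decreases $\sum_k y_k r_k$, using convexity of $s \mapsto (1-p)^{\binom{s}{k-1}}$ in the relevant parameter range and the monotonicity of $r_i$. (For $k \geq 3$ one first discards any $V_i$ with $y_i \geq 1/2$, since such a factor $(1-y_i)^{r_i} \leq 2^{-n_0}$ already kills the expected count.) Hence it suffices to analyze the uniform case.

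In the uniform case with $\mu = \bar\sigma = S/(tn) \leq 1/\alpha$, the identity $\sum_i r_i = tn - S(t+1)/2 \geq n(t-1)/2$, together with the bound $r_1 \geq n-m_0 \geq n/2$ that handles $t=1$ separately, gives $\sum_i y_i r_i \geq y(\mu)\cdot tn/4$. The bound reduces to $\log \E \leq tn\,\Psi(\mu)\,(1+o(1))$, where
\[
\Psi(\mu) = \mu\log(e/\mu) - \frac{d}{k}\mu^k - \frac{1}{4}e^{-d\mu^{k-1}}.
\]
The main analytic step is to show $\Psi(\mu) \leq -c$ for some absolute $c > 0$ over $\mu \in [0, 1/\alpha]$. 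The critical case is $\mu = 1/\alpha$: here $d\mu^{k-1} = \log d/(k-1) - 5\log\log d$, so $e^{-d\mu^{k-1}} = \log^5 d\cdot d^{-1/(k-1)}$. The $-5(k-1)\log\log d$ correction in the definition of $\alpha$ is exactly what makes the maximality term dominate the combined first two terms---at $\mu = 1/\alpha$ the first two terms sum to $\log d/(k\alpha) = O((\log d)^{k/(k-1)}/d^{1/(k-1)})$, while $e^{-d\mu^{k-1}}/4 = \Theta(\log^5 d/d^{1/(k-1)})$ beats that by a factor of order $\log^{(4k-5)/(k-1)} d$. For $\mu \ll 1/\alpha$ the $\Omega(1)$ maximality term trivially dominates; for $\mu$ close to the maximum-IS threshold the $(d/k)\mu^k$ independence term takes over (Lemma~\ref{auxh} rules out still larger $\mu$).

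The main obstacle is to verify $\Psi(\mu) \leq -c$ rigorously across the whole feasible range and to justify the reduction to the uniform distribution cleanly (especially the exchange argument for $\sum y_i r_i$ when $k \geq 3$). A final union bound over $t \leq d$ and over $(s_1,\ldots,s_t) \in S_t$ (with $|S_t| \leq n^d$) then yields
\[
\Pr(\text{bad sequence exists}) \leq n^{O(d)}\cdot e^{-\Omega(n\log^5 d/d^{1/(k-1)})} = o(1)
\]
for $d = O(1)$, completing the proof.
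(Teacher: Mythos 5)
Your first-moment setup is sound: the independence and maximality events do depend on pairwise disjoint sets of potential edges, so the probability factors as you claim, and your analysis of the uniform case essentially reproduces the paper's central computation (the $(\log d)^5$ from the $-5(k-1)\log\log d$ correction beating the entropy $(\log d)^{k/(k-1)}$). But the reduction to the uniform case is a genuine gap, and it is exactly where the work of the lemma lies. The claim that replacing two unequal sizes by their average decreases $\sum_i y_i r_i$ is false: already for $k=2$, $t=2$ with $s_1+s_2=S$ fixed, writing $B(x)=(1-p)^{S/2+x}(n-S/2-x)+(1-p)^{S/2-x}(n-S)$ one computes $B'(0)=(1-p)^{S/2}\brac{\tfrac{S}{2}\ln(1-p)-1}<0$, so the uniform point is not even a critical point of $\sum_i y_i r_i$, and averaging a nearby skewed configuration strictly increases the sum. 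The failure is not merely infinitesimal: since $r_i=n-\sum_{j\le i}s_j$ depends on the prefix sums, front-loading large sets drives the residuals of the later, small-$s_i$ (hence large-$y_i$) sets down toward $n_0$, collapsing $\sum_i y_i r_i$ from the uniform value $\Theta\brac{tn\,e^{-d\mu^{k-1}}}$ to $O\brac{(t-t')n_0}$ plus negligible head terms, which is smaller by an unbounded factor. Your safety valve of discarding indices with $y_i\ge 1/2$ does not repair this, because the offending sets can have sizes only a polylog$(d)$ factor below $m_0$ (so $y_i<1/2$) and still contribute essentially nothing once their residuals are small. Consequently $\log\E$ is not maximized at the uniform configuration, and ``it suffices to analyze the uniform case'' is unjustified.

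The paper resolves precisely this difficulty by splitting on $t'=\max\set{i:\sum_{j\le i}s_j\le n-n/\log^2 d}$. When $t'\ge(1-1/\log d)t$, the Jensen/averaging step $\sum_{i\le t'}(1-p)^{\binom{s_i}{k-1}}\ge t'(1-p)^{\binom{\bar s'}{k-1}}$ is applied only over the prefix, where every residual is guaranteed to be at least $n/\log^2 d$, at the cost of a harmless $\log^2 d$ factor. Otherwise at least $t/\log d$ sets lie in the tail, their average size is forced to be small by \eqref{ubound} (using Lemma \ref{auxh} to lower-bound $t$), so each tail maximality event costs $e^{-\Omega(n_0)}$ and $(t-t')n_0$ swamps the entropy. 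Some device of this kind, controlling the interaction between the sizes $s_i$ and the residuals $r_i$, is indispensable; without it your argument does not close. A minor remark: the independence factor $(1-p)^{\binom{s_i}{k}}$ you carry is not actually needed in the feasible region $\bar s\le m_0$ (the paper omits it, using Lemma \ref{auxh} only to cap individual sizes), though including it does no harm.
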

\begin{proof}
Fix $t\in[d]$, $(s_1,...,s_t)\in S_t$ and let $\bar{s}=\frac{1}{t}  \sum_{i\in [t]} s_i$. Since $(s_1,...,s_t)\in S_t$ we have that $\bar{s} \leq \frac{1}{t} \cdot tm_0=m_0$. There are $\binom{n}{s_1,s_2,...,s_t,n-t\bar{s}}$ ways to pick disjoint sets $V_1,V_2,...,V_t \subseteq V$ of sizes $s_1,...,s_t$ respectively. So $V_1,...,V_t$ satisfy condition (i) of Lemma \ref{indeph} only if for every $i\in [t]$ and every $v \in V \setminus  \underset{ j \in [i] }{ \bigcup} V_{j}$, there exist $u_1,...,u_{k-1}\in V_i$ such that $\{u_1,...,u_{k-1},v\}\in E(H)$.  So, given $V_1,...,V_t$ the probability that we have (i) is at most 
\beq{p1}{
p_1= \prod_{i=1}^t (1-(1-p)^{\binom{s_i}{k-1}})^{n-\sum_{j=1}^i s_j}
\leq \exp\set{-\sum_{i=1}^t \brac{(1-p)^{\binom{s_i}{k-1}} \brac{{n-\sum_{j=1}^i s_j}}}}.
}   
Now let $t'=\max\set{i: \sum_{j\leq i} s_j \leq n-\frac{n}{\log^2 d}}$ and set
$\bar{s}'=\frac{1}{t'}\sum_{i=1}^{t'}s_i$. We consider 2 cases.

{\bf Case 1:} $t'\geq \big(1-\frac{1}{\log d}\big)t$.\\ 
 Now $t\bar{s}\geq t'\bar{s}'$ and so $\bar{s}'-\bar{s}\leq \frac{t-t'}{t} \bar{s}'\leq \frac{\bar{s}'}{\log d}$, which implies that $\bar{s}'\leq \bar{s}\brac{1-\frac{1}{\log d}}^{-1} \leq m_0\brac{1+\frac{2}{\log d}}$. 
Then,
\begin{align*}
&\ \ \ \ \sum_{i=1}^t \brac{(1-p)^{\binom{s_i}{k-1}} \brac{n-\sum_{j=1}^i s_j} }\\
&\geq\sum_{i=1}^{t'} \brac{(1-p)^{\binom{s_i}{k-1}} \brac{n-\sum_{j=1}^i s_j} }\\
&\geq \frac{n}{\log^2d} \sum_{i=1}^{t'} (1-p)^{\binom{s_i}{k-1}}\geq \frac{nt'}{\log^2d} (1-p)^{\binom{\bar{s}'}{k-1}}\geq \frac{nt}{2\log^2d}(1-p)^{\binom{m_0\brac{1+\frac{2}{\log d}}}{k-1}}\\
&\geq \frac{nt}{2\log^2d}\exp\set{-(p+p^2)\binom{\bfrac{(\log d-5(k-1)\log \log d)}{(k-1)d}^{1/(k-1)}\brac{1+\frac{2}{\log d}}n}{k-1}}
\\ &\geq \frac{nt}{2\log^2d} \exp\set{- \frac{\log d-5(k-1)\log \log d}{k-1}\cdot \brac{1+\frac{3(k-1)}{\log d}}} \\
&\geq \frac{nt\log^2d}{d^{1/(k-1)}}.
\end{align*}
Now
$$\binom{n}{s_1,...,s_t,n-t\bar{s}}\leq \binom{n}{\bar{s},...,\bar{s},n-t\bar{s}}\leq \prod_{i=1}^t \binom{n}{\bar{s}} \leq \bfrac{en}{\bar{s}}^{t\bar{s}} 
\leq \bfrac{en}{m_0}^{tm_0}.$$
Thus the probability that for some $t\leq d$ there exist $V_1,...,V_t$ satisfying conditions (i), (ii) of Lemma \ref{indeph} is bounded by
\mults{
\sum_{t=1}^d \sum_{(s_1,...,s_t)\in S_t} \binom{n}{s_1,s_2,..,s_t,n-\sum_{i\in [t]} s_i} p_1
\\  \leq \sum_{t=1}^d \sum_{(s_1,...,s_t)\in S_t} \bfrac{en}{m_0}^{tm_0} 
\exp\set{-\frac{nt\log^2d}{d^{1/(k-1)}}}\leq\sum_{t=1}^d n^t  
\brac{\frac{(e\a)^{(\log d)^{1/(k-1)}}}{d^{\log d}}}^{nt/d^{1/(k-1)}}  =o(1).
}
{\bf Case 2:} $t'< \big(1-\frac{1}{\log d}\big)t$.\\ 
Thus $t-t'\geq \frac{t}{\log d}$. Observe that from Lemma \ref{auxh} we can assume that 
\beq{tg}{
t\geq t'\geq \brac{\brac{1-\frac{1}{\log^2d}} \bigg/ \bfrac{2k \log d}{(k-1)d}^{\frac{1}{k-1}}}-1\geq \frac14\bfrac{d}{\log d}^{\frac{1}{k-1}}.
}
For \eqref{tg} we are using Lemma \ref{auxh} to argue that we need at least this many independent sets to partition a set of size $n\brac{1-\frac{1}{\log^2d}}$. The -1 comes from the fact that the upper bound in the definition of $t'$ may not be tight.
 
Thus,
\begin{equation}\label{ubound}
u=\frac{1}{t-t'}\sum_{i=t'+1}^t s_i\leq \frac{\log d}{t} \cdot  n\brac{\frac{1}{\log^2 d} + \bfrac{2k \log d}{(k-1)d}^{\frac{1}{k-1}}}
\leq 4\bfrac{\log d}{d}^{\frac{1}{k-1}} \cdot \frac{n}{\log d}
\end{equation}
and now with $p_1$ as defined in \eqref{p1} we have
\begin{align*}
p_1&\leq \prod_{i=t'+1}^t (1-(1-p)^{\binom{s_i}{k-1}})^{n-\sum_{j=1}^i s_j} 
\leq \prod_{i=t'+1}^t (1-(1-p)^{\binom{s_i}{k-1}})^{n_0} \\
& \leq \exp\set{-n_0 \sum_{i=t'+1}^t (1-p)^{\binom{s_i}{k-1}}}\\
& \leq \exp\set{-n_0 (t-t') \exp\set{-(p+p^2) \binom{u}{k-1}}} \\
&\leq \exp\set{-n_0 (t-t')\exp\set{-d\bfrac{u}{n}^{k-1}}\brac{1+O\bfrac{1}{n}}}\\
&\leq \exp\set{-n_0 (t-t')\exp\set{-\frac{4^{k}}{\log^{k-2}d}}}\\
&\leq e^{-(t-t')n_0/2}. 
\end{align*}
Thus the probability that for some $t\leq d$ there exist $V_1,...,V_t$ satisfying conditions (i), (ii) of Lemma \ref{indeph} is bounded by
\mults{
P=\sum_{t=1}^d \sum_{(s_1,...,s_t)\in S_t} \prod_{i=1}^{t'} \binom{n-\sum_{j=1}^{i-1}s_j}{s_i} \prod_{i=t'+1}^{t} \binom{n-\sum_{j=1}^{i-1}s_j}{s_i} p_1 
\\  \leq \sum_{t=1}^d \sum_{(s_1,...,s_t)\in S_t} \bfrac{en}{\bar{s}'}^{t'\bar{s}'}  \bfrac{en}{{u}}^{(t-t')u}  
e^{-(t-t')n_0/2}.
}
For sufficiently large $d$, \eqref{ubound} implies $u\leq m_0$ and we also have that $n_0=16m_0\log^{2}d$. Therefore 
$$ \bfrac{en}{{u}}^{(t-t')u}   e^{-(t-t')n_0/4} 
\leq \bfrac{en}{{m_0}}^{(t-t')m_0}   e^{-4(t-t')m_0 \log^2 d}\leq  e^{-3(t-t')m_0 \log^2 d}\leq e^{-3tm_0 \log d}.$$
Furthermore, Lemma \ref{auxh} implies that $\bar{s}'\leq \bfrac{2k \log d}{(k-1)d}^{\frac{1}{k-1}}n\leq 3m_0$. Thus
$$\bfrac{en}{{\bar{s}'}}^{t'\bar{s}'}   e^{-(t-t')n_0/4} 
\leq \bfrac{en}{3m_0}^{3tm_0} e^{-4(t-t')m_0 \log^2 d}\leq \bfrac{en}{3m_0}^{3tm_0} e^{-4tm_0 \log d}\leq e^{-tm_0 \log d}.$$
So,
$$P\leq dn^d e^{-4tm_0\log d}=o(1).$$
\end{proof}
\begin{lemma}\label{densityh}
If $k\geq 2$ and $d$ is sufficiently large then w.h.p. every set $S\subset V$ of size at most $n_0$ spans fewer than $3|S|\log^{3k} d$ edges in $H$. Hence no subset of size at most $n_0$ contains a $3\log^{3k} d$ core.
\end{lemma}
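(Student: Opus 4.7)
The plan is a first-moment union bound carried out in $H_{n,p;k}$ and then transferred to $H_{n,m;k}$ via \eqref{cP}. Set $\ell=3\log^{3k}d$. Since the number of edges of $H_{n,p;k}$ lying inside a fixed $s$-set is $\Bin(\binom{s}{k},p)$, the probability that some $s$-set spans at least $\ell s$ edges is bounded by
\[
q_s:=\binom{n}{s}\binom{\binom{s}{k}}{\ell s}p^{\ell s}\leq \brac{\frac{en}{s}}^s\brac{\frac{e\binom{s}{k}p}{\ell s}}^{\ell s}.
\]

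I would then estimate the per-potential-edge factor. Using $\binom{s}{k}\leq s^k/k!$ and $p\sim(k-1)!d/n^{k-1}$ gives $e\binom{s}{k}p/(\ell s)\leq C(k)(d/\ell)(s/n)^{k-1}$; combined with $s\leq n_0=16n\log^2d/\alpha$ and the identity $\alpha^{k-1}\sim (k-1)d/\log d$ coming from \eqref{defab}, this factor is at most $C'(k)/\log^{k+1}d$ when $s=n_0$ and scales like $(s/n_0)^{k-1}$ for smaller $s$.

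Taking logarithms, $\log q_s=s\,A(s)$ with $A(s)=\log(en/s)+\ell\log(e\binom{s}{k}p/(\ell s))$ a linear function of $\log s$ of positive slope $\ell(k-1)-1$, so $s\,A(s)$ is convex in $s$ on $[k,n_0]$ and attains its maximum at an endpoint. At $s=k$ the $-k(k-1)\ell\log n$ contribution dominates, yielding $q_k\leq n^{-3k(k-1)\log^{3k}d}$. At $s=n_0$ the positive $\Theta(\log^{3k+1}d)$ contribution from $\ell\log d$ and the negative $-(k-1)\ell\log(n/n_0)\sim -\ell\log d$ contribution (using $\log(n/n_0)\sim \log d/(k-1)$) cancel at leading order, leaving $\log q_{n_0}\leq-\Omega(n_0\log^{3k}d\log\log d)$. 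Both bounds give $q_s\leq n^{-\Omega(1)}$, hence $\sum_{s=k}^{n_0}q_s\leq n^{-\Omega(1)}$, which easily survives the $O(\sqrt{m})$-factor loss of \eqref{cP}.

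The main obstacle is the near cancellation at $s=n_0$: the positive $n_0\log d/(k-1)$ from $\binom{n}{n_0}$ and the $\ell n_0\log d$ coming from $p\sim d/n^{k-1}$ are balanced, at leading order, by $-(k-1)\ell n_0\log(n/n_0)$ from the $(s/n)^{k-1}$ factor, leaving only an $\Omega(\ell n_0\log\log d)$ negative margin. This margin is controlled by the $\log\log d$ correction in the denominator of $\alpha^{k-1}$ built into \eqref{defab}, and is precisely what the exponent $3k$ in $\beta=3\log^{3k}d$ must be large enough to preserve.
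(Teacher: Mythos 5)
Your proposal is correct and follows essentially the same route as the paper: a first-moment union bound over sets of size $s\le n_0$ using $\binom{n}{s}\binom{\binom{s}{k}}{\ell s}p^{\ell s}\le\brac{\frac{en}{s}}^s\brac{\frac{e\binom{s}{k}p}{\ell s}}^{\ell s}$, with the decisive observation that $d(n_0/n)^{k-1}/\ell=O(\log^{-(k+1)}d)$ because $3k>2k-1$; the paper simply bounds the summand as $\brac{(s/n)^{k-1-1/L}e^{1+1/L}d/L}^{Ls}$ for every $s$ rather than invoking convexity at the endpoints. (One small quibble: the $\log\log d$ margin at $s=n_0$ comes mainly from $\log\ell=3k\log\log d$ exceeding the $(2k-1)\log\log d$ contributed by the $\log^2 d$ in $n_0$ and the $\log d$ in $\alpha^{k-1}$, not from the $5(k-1)\log\log d$ correction in \eqref{defab}, which in fact only helps.)
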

\begin{proof}
Let $L=3\log^{3k}d$. The probability that there exists $S\subset V$ of size at most $n_0$ that spans at least $ t= L|S|$ edges is bounded by
\mults{
\sum_{s=1}^{n_0} \binom{n}{s} \binom{\binom{s}{k}}{t} p^{t} 
\leq \sum_{s=1}^{n_0} \brac{ \bfrac{en}{s}^{\frac{s}{t}} \cdot \frac{e\binom{s}{k}}{t} \cdot \frac{d}{\binom{n-1}{k-1}} }^{t} \leq \sum_{s=1}^{n_0}\brac{\bfrac{en}{s}^{1/L}\frac{eds}{t}  \bfrac{s}{n}^{k-1}}^t
\\=\sum_{s=1}^{n_0}\brac{ \bfrac{s}{n}^{k-1-1/L}\frac{e^{1+1/L}d}{L} }^t=o(1).
}
\end{proof}
\emph{Proof of Theorem \ref{th1}:} Let $\alpha,\b$ be as in \eqref{defab}. We argue next that the properties given by Lemmas \ref{auxh}, \ref{indeph} and \ref{densityh} imply that $H_{n,p;k}$ is $(\a,\b)$-colorable for $d$ sufficiently large. Lemma \ref{randomh} then follows directly from \eqref{cP} and Theorem \ref{semigreedy}.

Consider a sequence of sets $V_1,V_2,\ldots,V_\a$ such that $V_i$ is maximally independent in $[n]\setminus \underset{j<i}{\bigcup}V_j$ for $j\leq \a$ (some of these sets can be empty). It follows from Lemma \ref{indeph} that because $\a m_0=n$, we must have $\sum_{i=1}^\a |V_i|\geq n-n_0$ and then Lemma \ref{densityh} implies that $[n]\setminus \underset{i\leq \a}{\bigcup}V_i$ does not have a $\b$-core. This completes the proof of the first part of the theorem.

When $q\geq \a+2\b+2$ we see that we have $2\b+2$ colors with which to color a hypergraph with no $\b$-core and Theorem 1 of \cite{BP} implies that we need $O(\b n)$ vertex re-colorings to do this. We will encounter at most $\a$ such hypergraphs in our re-coloring. This together with Remark \ref{rem1} shows that there will be $O(\a\b n)$ re-colorings overall and this proves the second part of the theorem. 
\qed

\end{document}